\documentclass[11pt]{amsart}
\usepackage{amssymb,amsmath,amsthm,enumerate,graphicx,caption}
\usepackage{tikz-cd}
\usepackage{soul}
\theoremstyle{plain}
\newtheorem{theorem}{Theorem}[section]
\newtheorem{lemma}[theorem]{Lemma}
\newtheorem{proposition}[theorem]{Proposition}
\newtheorem{corollary}[theorem]{Corollary}

\usepackage{soul} 
\usepackage{subcaption} 

\theoremstyle{remark}
\newtheorem*{remark}{Remark}
\newtheorem*{remarks}{Remarks}

\let\Re\undefined
\let\Im\undefined
\DeclareMathOperator{\Re}{\mathrm{Re}}
\DeclareMathOperator{\Im}{\mathrm{Im}}
\DeclareMathOperator{\tr}{\mathrm{tr}}

\begin{document}

\title[Schwarz--Jack lemma]{A Schwarz--Jack lemma, circularly symmetric domains  and  numerical ranges}

\author[J. Mashreghi]{Javad Mashreghi}
\address{D\'epartement de math\'ematiques et de statistique, Universit\'e Laval, Qu\'ebec (QC), G1V 0A6, Canada}
\email{javad.mashreghi@mat.ulaval.ca}

\author[A. Moucha]{Annika Moucha}
\address{Department of Mathematics, University of W\"urzburg, 97074 W\"urzburg, Germany}
\email{annika.moucha@uni-wuerzburg.de}

\author[R. O'Loughlin]{Ryan O'Loughlin}
\address{Department of Mathematics and Statistics, University of Reading, White\-knights, Reading RG6 6AX, UK}
\email{r.d.oloughlin@reading.ac.uk}

\author[T. Ransford]{Thomas Ransford}
\address{D\'epartement de math\'ematiques et de statistique, Universit\'e Laval, Qu\'ebec (QC), G1V 0A6, Canada}
\email{thomas.ransford@mat.ulaval.ca}

\author[O. Roth]{Oliver Roth}
\address{Department of Mathematics, University of W\"urzburg, 97074 W\"urzburg, Germany}
\email{oliver.roth@uni-wuerzburg.de}

\begin{abstract}
We prove a Schwarz--Jack lemma for holomorphic functions on the unit disk with the property that their maximum modulus on each circle about the origin is attained at a point on the positive real axis. With the help of this result, we  establish monotonicity and convexity properties  of conformal maps of circularly symmetric and
bi-circularly symmetric domains. As an application, we give a new proof of  Crouzeix's theorem that the numerical range of any  $2\times 2$ matrix is a $2$-spectral set for the matrix. Unlike other proofs, our approach does not depend on the explicit formula for the conformal mapping of an ellipse onto the unit disk.
\end{abstract}

\thanks{Mashreghi supported by an NSERC Discovery Grant and by the Canada Research Chairs program. Moucha  supported by the Centre de Recherches Math\'ematiques (Montr\'eal), the Simons Foundation and  by the Alexander von Humboldt Stiftung. O'Loughlin  supported by an EPSRC Postdoctoral Fellowship. Ransford supported  by an NSERC Discovery Grant. Roth supported by the Centre de Recherches Math\'ematiques (Montr\'eal) and the Simons Foundation.}

\keywords{Schwarz lemma, Jack lemma, circularly symmetric domain, numerical range, ellipse}

\subjclass[2020]{30C20, 30C80, 47A12}

\maketitle

\section{Introduction}\label{S:Intro}

The celebrated Crouzeix conjecture asserts that the numerical range of any $n \times n$ matrix is a $2$-spectral
set for the matrix. While this result has been established for the case $n = 2$ (where the numerical range is always an ellipse or a line segment), the conjecture remains open for $n \geq 3$.
Even for $n=2$,  the existing proofs are rather technical, relying heavily on the explicit formula of the conformal mapping
from an ellipse onto the unit disk
(see \cite{BCD06,CGL17,Cr04}). 
The aim of this work is twofold: first, to elucidate the intrinsic properties of
the conformal mapping in question, thereby reducing the reliance on explicit formulas; second, to lay
conceptual groundwork for broader generalizations. 

To this end, we establish two auxiliary results of independent interest.
The first is a convexity theorem for holomorphic functions, 
which we have labelled a Schwarz--Jack lemma, since it
combines ideas from the classical lemmas of Schwarz and Jack.
The second is a characterization of those simply connected domains
that are circularly symmetric in terms of their Riemann mapping functions.
Using these results, we establish a convexity theorem for conformal mappings of bi-circularly symmetric domains, a large class of domains that includes ellipses. This result is then applied to 
furnish a new proof of Crouzeix's theorem for $2\times2$ matrices
that does not depend on the explicit formula for the Riemann mapping function of an ellipse.


\section{A Schwarz--Jack lemma}\label{S:SchwarzJack}

The title of this section refers to the following theorem.

\begin{theorem}\label{T:SchwarzJack}
Let $f$ be a holomorphic function on the open unit disk $\mathbb{D}$.
Suppose that, for some $n\ge1$, we have
$f(0)=f'(0)=\cdots=f^{(n-1)}(0)=0$ and $f^{(n)}(0)>0$.
Suppose further that
\begin{equation}\label{E:Jack}
|f(z)|\le \bigl|f(|z|)\bigr| \quad(z\in\mathbb{D}).
\end{equation}
Then  $f|_{[0,1)}$  is a positive, strictly increasing,
convex function.
Moreover, $f$ is strictly convex on $[0,1)$ unless $f(z)\equiv \alpha z, \alpha>0$.
\end{theorem}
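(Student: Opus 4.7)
The plan is to extract positivity, monotonicity, and convexity in sequence, exploiting the hypothesis $|f(z)|\le|f(|z|)|$ at three successive levels.

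For positivity, note first that if $f(r_0)=0$ for some $r_0\in(0,1)$, the hypothesis would force $f\equiv 0$ on the entire circle $|z|=r_0$, hence on $\mathbb{D}$, contradicting $f^{(n)}(0)>0$. Since $\theta\mapsto |f(re^{i\theta})|^2$ attains its maximum at $\theta=0$, the vanishing of its $\theta$-derivative there yields $\Im[\overline{f(r)}f'(r)]=0$, so $f'(r)/f(r)\in\mathbb{R}$. Writing $f(r)=\rho(r)e^{i\phi(r)}$ with $\rho>0$ and $\phi$ continuous on $(0,1)$, this forces $\phi'\equiv 0$, and the asymptotic $f(r)/r^n\to f^{(n)}(0)/n!>0$ fixes $\phi\equiv 0$. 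Hence $f(r)>0$ on $(0,1)$. Monotonicity then follows by applying the maximum principle to $g(z):=f(z)/z^n$, which is holomorphic on $\mathbb{D}$ with $g(0)>0$ and inherits the hypothesis; consequently $g(r)$ is nondecreasing in $r$, giving $rf'(r)\ge nf(r)>0$, so $f'>0$ on $(0,1)$.

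The heart of the proof is convexity. Set $U(r):=\log f(r)$, so that $U'(r)=f'(r)/f(r)\ge n/r$ by the preceding step. Because $f\ne 0$ in a neighbourhood of the open segment $(0,1)$, the function $h(r,\theta):=\log|f(re^{i\theta})|$ is harmonic there, and Laplace's equation in polar form $(rh_r)_r=-h_{\theta\theta}/r$ combined with $h_{\theta\theta}(r,0)\le 0$ (the maximum condition) gives $(rU'(r))'\ge 0$, equivalently $U''(r)\ge -U'(r)/r$. Substituting into the identity $f''(r)=f(r)\bigl(U''(r)+U'(r)^2\bigr)$,
\[
U''(r)+U'(r)^2 \;\ge\; U'(r)\Bigl(U'(r)-\frac{1}{r}\Bigr) \;\ge\; 0,
\]
since $U'(r)\ge n/r\ge 1/r$. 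Hence $f''\ge 0$ on $(0,1)$, and convexity on $[0,1)$ follows by continuity at $0$.

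For the final claim, if strict convexity fails then $f''\equiv 0$ on some subinterval, saturating the chain above. The equality $U'(r)^2=U'(r)/r$ forces $U'(r)=1/r$ there; this pins $n=1$ (since $U'\ge n/r$) and $f(r)=\alpha r$ on the subinterval for some $\alpha>0$, whence $f(z)\equiv\alpha z$ on $\mathbb{D}$ by the identity theorem. I expect the main obstacle to be setting up the key inequality $(rU'(r))'\ge 0$ cleanly from polar harmonicity and verifying that the two slack inequalities $U''\ge -U'/r$ and $(U')^2\ge U'/r$ can be simultaneously tight only in the linear case.
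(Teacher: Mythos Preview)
Your proof is correct and follows a genuinely different route from the paper's. The paper obtains monotonicity by applying the Schwarz lemma to $z\mapsto f(rz)/f(r)$, giving $f(s)/f(r)\le s/r$ for $0<s<r<1$, and then derives convexity by upgrading this to the Schwarz--Pick lemma, which yields $f'(s)\le (f(r)-f(s))/(r-s)$ and hence $f''\ge 0$. You instead get monotonicity from the maximum principle applied to $f(z)/z^n$, and convexity from the polar Laplacian of $\log|f|$: the second-order maximum condition $h_{\theta\theta}(r,0)\le 0$ at $\theta=0$ becomes $(rU')'\ge 0$, which together with $U'\ge n/r\ge 1/r$ forces $U''+U'^2\ge 0$ and hence $f''\ge 0$. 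The paper's approach situates the result squarely within the Schwarz--Pick circle of ideas and produces the clean intermediate inequality $f(s)/s\le f(r)/r$; your approach is more self-contained and real-variable in flavour, exploiting the harmonicity of $\log|f|$ directly. Both arguments close the strict-convexity clause the same way, by observing that $f''\equiv 0$ on a subinterval forces $f$ to be linear via the identity theorem.
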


We call this a Schwarz--Jack lemma since it is obtained by combining
ideas from the classical lemmas of Schwarz and Jack. (For
background on Jack's lemma, see \cite{Fo19,Ja71}.) What makes this result maybe a little unusual is the fact that part of the conclusion is of second-order, namely that $f''\ge0$ on $[0,1)$.

\begin{proof}[Proof of Theorem~\ref{T:SchwarzJack}]
We begin by showing that $f$ is strictly positive on $(0,1)$.
This part of the argument is essentially the same as in Jack's lemma.
Certainly $f\not\equiv0$ since $f^{(n)}(0)>0$,
and therefore condition~\eqref{E:Jack} implies that $f$
has no zeros in $(0,1)$.
For each $re^{i\theta}\in\mathbb{D}$ such that $f(re^{i\theta})\ne0$, we have
\begin{equation}\label{E:derivative}
\frac{\partial}{\partial\theta}\log |f(re^{i\theta})|
=\Re\Bigl(\frac{\partial}{\partial\theta}\log f(re^{i\theta})\Bigr)
=-\Im\Bigl(re^{i\theta}\frac{f'(re^{i\theta})}{f(re^{i\theta})}\Bigr).
\end{equation}
Condition~\eqref{E:Jack} implies that the left-hand side of
\eqref{E:derivative} is zero whenever $\theta=0$.
It follows from \eqref{E:derivative} that  $f'(r)/f(r)$ is real-valued for all $r\in(0,1)$.
This implies that $f$ maps $(0,1)$ into a half-line passing through
the origin. As $f(0)=f'(0)=\cdots=f^{(n-1)}(0)=0$ and $f^{(n)}(0)>0$, this half-line must be the positive real axis.

Next we show that $f$ is  increasing on $(0,1)$.
Let $0<s<r<1$.
The condition \eqref{E:Jack}
implies that $f$ maps the open disk $D(0,r)$ 
with center $0$ and radius~$r$ into the disk $D(0,f(r))$.
Since also $f(0)=0$,
 we may apply Schwarz's lemma
to  the function $z\mapsto f(rz)/f(r)$ to get
\[
|f(rz)/f(r)|\le|z| \quad(z\in\mathbb{D}).
\]
Setting $z=s/r$, we deduce that
\begin{equation}\label{E:monotone}
f(s)/f(r)\le s/r \quad(0<s<r<1).
\end{equation}
In particular, $f$ is strictly increasing on $[0,1)$.

Now we prove that $f$ is convex on $[0,1)$.
Once again, let $0<s<r<1$. This time we apply the
Schwarz--Pick lemma to $f(rz)/f(r)$, to obtain
\[
\frac{|rf'(rz)/f(r)|}{1-|f(rz)/f(r)|^2}\le \frac{1}{1-|z|^2}
\quad(z\in\mathbb{D}).
\]
Setting $z=s/r$  and rearranging, we obtain
\[
f'(s)\le \Bigl(\frac{f(r)-f(s)}{r-s}\Bigr)\Bigl(\frac{1+f(s)/f(r)}{1+s/r}\Bigr).
\]
By \eqref{E:monotone}, the second factor in  the
right-hand side is at most $1$. Hence
\[
f'(s)\le \frac{f(r)-f(s)}{r-s} \quad(0<s<r<1).
\]
Writing $r=s+h$, we see that this is equivalent to
\[
f(s+h)-f(s)-hf'(s)\ge0 \quad(0<s<s+h<1).
\]
Since
\[
f''(s)=\lim_{h\to0^+} \frac{f(s+h)-f(s)-hf'(s)}{h^2/2},
\]
we deduce that $f''(s)\ge0$ for all $s\in(0,1)$.
Thus $f$ is convex on $[0,1)$. 

Finally, if $f|_{[0,1)}$ is not strictly convex, then $f''$ vanishes on some non-empty open subinterval of $[0,1)$, 
so, by the identity principle for holomorphic functions, $f(z)\equiv \alpha z$ on $\mathbb{D}$, where $\alpha>0$.
\end{proof}

Theorem~\ref{T:SchwarzJack} can be used in various ways
to obtain information about conformal mappings.
In order to implement it, one needs to be able to verify
the condition~\eqref{E:Jack}.  Since this condition arises
in Jack's lemma, it seems natural to call it the \emph{Jack condition}.
In the next section we shall establish a simple geometric criterion
for simply connected domains guaranteeing that their Riemann maps satisfy the Jack condition.


\section{Circularly symmetric domains}\label{S:circsym}

Let $\Omega$ be a plane domain. We say that $\Omega$
is \emph{circularly symmetric} (about~$0$) if its intersection with each circle
around the origin is either empty, or the whole circle, or consists of a subarc of the circle centred on the positive $x$-axis.

Circularly symmetric domains are exactly those obtained from general ones by
applying the process of circular symmetrization with respect to 
the positive real axis.
They were studied by Jenkins in \cite{Je55}.
(His definition of circular symmetry includes some extra technical conditions, but these are 
automatically satisfied if the domain is simply connected, which
is the only case that we shall consider here.)

Our main result in this section is a characterization, via their Riemann mapping functions, of those
simply connected domains that are circularly symmetric.

\begin{theorem}\label{T:circsym}
Let $\Omega$ be a simply connected, proper subdomain of $\mathbb{C}$ containing $0$.
Let $f$ be the unique conformal mapping of $\mathbb{D}$ onto $\Omega$ such that $f(0)=0$ and $f'(0)>0$.
Then the following statements are equivalent:
\begin{enumerate}[\normalfont(i)]
\item $\Omega$ is circularly symmetric;
\item $f(\overline{z})=\overline{f(z)}$ for all $z\in\mathbb{D}$ and, for each $r\in(0,1)$, the map $\theta\mapsto|f(re^{i\theta})|$ is decreasing on $[0,\pi]$.
\end{enumerate}
\end{theorem}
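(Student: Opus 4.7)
Plan: I would prove the two implications separately, using the level-set structure of $|f|$ as the main tool.

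For (ii) $\Rightarrow$ (i), the symmetry $f(\overline{z}) = \overline{f(z)}$ gives $\Omega = \overline{\Omega}$. For the circular symmetry, I would fix $\rho > 0$ and analyze $L_\rho := \{z \in \mathbb{D} : |f(z)| = \rho\}$. The monotonicity of $\theta \mapsto |f(re^{i\theta})|$ on $[0,\pi]$, together with real-analyticity of $|f|^2$, makes that monotonicity strict unless $f(z) \equiv \alpha z$ (in which case $\Omega$ is a disk and there is nothing to prove); so on each circle $\{|z|=r\}$ the value $\rho$ is attained at most twice, symmetrically about $\mathbb{R}$. Combined with the strict monotonicity of $f|_{(-1,1)}$, letting $r$ range over the interval where $\rho \in [\min_{|z|=r}|f|,\max_{|z|=r}|f|]$ identifies $L_\rho$ as either a Jordan curve enclosing $0$ (when $\rho < R^- := \lim_{r \to 1^-}|f(-r)|$) or a Jordan arc that crosses the positive real axis at $f^{-1}(\rho)$ and tends to $\partial\mathbb{D}$ at both ends. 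In either case $L_\rho$ is connected, so $\Omega \cap \{|w|=\rho\} = f(L_\rho)$ is a connected, $\mathbb{R}$-symmetric subset of $\{|w|=\rho\}$ containing $\rho$—that is, an arc centered on the positive real axis (or the full circle), as required.

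For (i) $\Rightarrow$ (ii), the symmetry $f(\overline{z}) = \overline{f(z)}$ is immediate from the uniqueness of the normalized Riemann map applied to the holomorphic function $z \mapsto \overline{f(\overline{z})}$. The monotonicity of $\theta \mapsto |f(re^{i\theta})|$ on $[0,\pi]$ is substantively harder. Running the level-set analysis in reverse, circular symmetry of $\Omega$ makes $\Omega \cap \{|w|=\rho\}$ connected, so $L_\rho = f^{-1}(\Omega \cap \{|w|=\rho\})$ is connected as well; but this is not yet enough, since a connected Jordan curve in $\mathbb{D}$ may a priori meet $\{|z|=r\}$ in $2k$ points for any $k \ge 1$. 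To bridge this gap I would appeal to Baernstein's theory of the star function: applied to the subharmonic function $u = \log|f|$, its subharmonicity theorem yields comparison inequalities between $u$ on $\Omega$ and on the circular symmetrization $\Omega^*$, and when $\Omega = \Omega^*$ these inequalities become equalities; this forces $\log|f(re^{i\theta})|$ to coincide with its symmetric-decreasing rearrangement on each circle $\{|z|=r\}$, which is precisely the claimed monotonicity.

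The main obstacle is thus the direction (i) $\Rightarrow$ (ii): passing from the purely topological connectedness of the level sets to the quantitative rearrangement statement seems to require the nontrivial subharmonic-function technology of circular symmetrization. A self-contained alternative would be to parametrize $L_\rho$ by the angle $\psi$ along $\Omega \cap \{|w|=\rho\}$ and show directly that $\psi \mapsto |f^{-1}(\rho e^{i\psi})|$ is monotone on the positive $\psi$-range---an ``inverse'' form of the sought monotonicity, perhaps accessible via an extremality argument for the conformal radius of restrictions of $f^{-1}$---but this does not appear substantially easier and reduces to essentially the same difficulty.
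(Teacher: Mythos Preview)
Your plan for (ii)$\Rightarrow$(i) is sound. The paper argues this direction differently and more directly: rather than analysing the full level set $L_\rho=\{|f|=\rho\}$, it fixes $w\in\Omega\setminus\{0\}$, sets $r:=|f^{-1}(w)|$, and looks at the Jordan curve $J:=f(\{|z|=r\})$. Strict monotonicity (obtained, as you also suggest, from the maximum principle applied to the harmonic function $\Im(zf'/f)$) forces $J$ to meet the circle $\{|w|=|w|\}$ only at $w$ and $\overline{w}$; since $|w|\in f([0,r))$ lies inside $J$, the subarc of $\{|w|=|w|\}$ through $|w|$ lies inside $J$ and hence in $\Omega$. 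This avoids the global bookkeeping of how $L_\rho$ sits inside $\mathbb{D}$, but your approach reaches the same conclusion.

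For (i)$\Rightarrow$(ii) your instinct that this is the substantive direction, and that it lives in symmetrization territory, is correct. However, the specific deduction you outline does not work. Baernstein's comparison is between the map $f:\mathbb{D}\to\Omega$ and the map $g:\mathbb{D}\to\Omega^*$; when $\Omega=\Omega^*$ one simply has $g=f$, so the ``inequalities become equalities'' tautologically and carry no information about whether $\theta\mapsto\log|f(re^{i\theta})|$ already coincides with its symmetric-decreasing rearrangement. What is actually needed is a statement about a single circularly symmetric domain, not a comparison with its symmetrization. The paper supplies this by quoting Jenkins' theorem (\cite[Theorem~2]{Je55}): if $\Omega$ is circularly symmetric then so is every sublevel image $f(D(0,r))$. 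From this the monotonicity of $\theta\mapsto|f(re^{i\theta})|$ follows by an elementary argument (each circle $\{|w|=\rho\}$ meets $\partial f(D(0,r))$ in at most a conjugate pair, so $\theta\mapsto|f(re^{i\theta})|$ is injective on $(0,\pi)$, hence monotone, and decreasing since $f(r)>|f(-r)|$). Jenkins' proof does use symmetrization/extremal-length ideas, so the flavour you anticipated is right; it is only the ``equality case of Baernstein'' route that does not deliver the conclusion.
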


As an immediate consequence of the implication (i)$\Rightarrow$(ii), we obtain the geometric criterion
for the Jack condition that we were seeking.

\begin{corollary}\label{C:circsym}
Let $\Omega$ and $f$ be as in Theorem~\ref{T:circsym}.
If $\Omega$ is circularly symmetric, then $f$ satisfies the Jack condition.
\end{corollary}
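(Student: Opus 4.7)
The plan is to derive the Jack condition directly from part (ii) of Theorem~\ref{T:circsym}, which is granted by the assumed implication (i)$\Rightarrow$(ii). Recall that the Jack condition asks that $|f(z)| \le |f(|z|)|$ for every $z \in \mathbb{D}$. Writing $z = r e^{i\theta}$ with $r \in [0,1)$ and $\theta \in (-\pi, \pi]$, this is the statement that for each fixed $r$ the function $\theta \mapsto |f(re^{i\theta})|$ attains its maximum at $\theta = 0$.

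First I would invoke Theorem~\ref{T:circsym} to obtain the two ingredients of (ii): the conjugation symmetry $f(\overline{z}) = \overline{f(z)}$, and the monotonicity of $\theta \mapsto |f(re^{i\theta})|$ on $[0,\pi]$. The symmetry implies that, for each fixed $r \in (0,1)$,
\[
|f(re^{-i\theta})| = |\overline{f(re^{i\theta})}| = |f(re^{i\theta})|,
\]
so this function of $\theta$ is even. Combined with the fact that it is decreasing on $[0,\pi]$, it must attain its maximum on $[-\pi,\pi]$ at $\theta = 0$, i.e.\ at the positive real axis. This gives exactly the bound $|f(re^{i\theta})| \le |f(r)|$, which is the Jack condition. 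The case $r = 0$ is trivial since $f(0) = 0$.

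There is essentially no obstacle here, as the corollary is a direct formal consequence of the theorem: all the nontrivial geometric content is absorbed into the implication (i)$\Rightarrow$(ii) of Theorem~\ref{T:circsym}. The only tiny point worth flagging is the reduction from the full range $\theta \in (-\pi,\pi]$ to $[0,\pi]$, which is precisely what the conjugation symmetry of $f$ delivers for free.
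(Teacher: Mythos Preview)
Your proof is correct and is exactly the argument the paper intends: the corollary is stated as an immediate consequence of the implication (i)$\Rightarrow$(ii) in Theorem~\ref{T:circsym}, and you have simply spelled out that immediacy by combining the monotonicity on $[0,\pi]$ with the evenness in $\theta$ coming from $f(\overline{z})=\overline{f(z)}$.
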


For the proof of Theorem~\ref{T:circsym}, 
we need two lemmas. The first of these is a result of Jenkins
\cite[Theorem~2]{Je55}. 

\begin{lemma}\label{L:circsym}
Let $\Omega$ and $f$ be as in Theorem~\ref{T:circsym}.
If $\Omega$ is circularly symmetric, then $f$ maps
each disk $D(0,r),0<r<1$, onto a circularly symmetric domain.
\end{lemma}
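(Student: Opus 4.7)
My approach is to recognize $G_r := f(D(0,r))$ as a super-level set of the Green's function of $\Omega$ with pole at the origin, and to transfer the circular symmetry of $\Omega$ into a symmetric-decreasing property of this Green's function on every circle about the origin. Since $f$ is the Riemann map of $\Omega$ fixing $0$, we have $g_\Omega(w,0)=\log\bigl(1/|f^{-1}(w)|\bigr)$, so
\[
G_r=\bigl\{w\in\Omega : g_\Omega(w,0)>-\log r\bigr\}.
\]
Hence, to establish that $G_r$ is circularly symmetric it suffices to show that for each $\rho>0$ the map $\theta\mapsto g_\Omega(\rho e^{i\theta},0)$ (extended by $0$ off $\Omega$) is even and non-increasing on $[0,\pi]$.

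Evenness is essentially free: the circular symmetry of $\Omega$ implies $\overline{\Omega}=\Omega$, so $z\mapsto\overline{f(\bar z)}$ is another conformal map of $\mathbb{D}$ onto $\Omega$ with the same normalization as $f$; uniqueness of the Riemann map forces $f(\bar z)=\overline{f(z)}$, whence $g_\Omega(\bar w,0)=g_\Omega(w,0)$. The heart of the argument is the \emph{angular monotonicity} of $g_\Omega(\cdot,0)$: for each $\rho>0$, the function $\theta\mapsto g_\Omega(\rho e^{i\theta},0)$ is non-increasing on $[0,\pi]\cap\{\theta:\rho e^{i\theta}\in\Omega\}$. To prove this I would invoke Baernstein's star-function method, or equivalently the classical symmetrization inequality for the Green's function with pole at the origin, applied to the pair $(\Omega,\Omega^*)$: since $\Omega=\Omega^*$, the standard inequality $g_{\Omega^*}\ge g_\Omega^{\#}$ (where $\#$ denotes the symmetric-decreasing rearrangement on each circle) becomes an equality of distributions on every circle, forcing $g_\Omega=g_\Omega^\#$ and hence the desired monotonicity.

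Once evenness and angular monotonicity are in hand, the conclusion is immediate: for every $\rho>0$ such that $C_\rho\cap G_r$ is a non-empty proper subset of $C_\rho$, this intersection is a single arc centred on the positive real axis, so $G_r$ is circularly symmetric. The main obstacle is plainly the symmetrization-theoretic input in the angular-monotonicity step; the surrounding reduction from circular symmetry of $G_r$ to the level-set structure of $g_\Omega$ is, by contrast, entirely formal. An alternative to Baernstein's machinery is Jenkins' original approach via the extremal length of the curve family in $\Omega$ separating $\partial G_r$ from $\partial\Omega$, where circular symmetrization strictly increases the modulus unless the inner boundary was already circularly symmetric inside $\Omega$.
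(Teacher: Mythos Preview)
The paper does not prove this lemma at all; it simply attributes the result to Jenkins \cite[Theorem~2]{Je55} and moves on. Your Green's-function reformulation---writing $G_r=\{w\in\Omega:g_\Omega(w,0)>-\log r\}$ via $g_\Omega(w,0)=-\log|f^{-1}(w)|$ and then reducing circular symmetry of $G_r$ to the statement that $\theta\mapsto g_\Omega(\rho e^{i\theta},0)$ is even and non-increasing on $[0,\pi]$---is correct and is the natural potential-theoretic packaging of the result. The evenness step is fine.

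The one place to tighten is the angular-monotonicity step. The pointwise inequality you quote, ``$g_{\Omega^*}\ge g_\Omega^{\#}$'', is not the standard formulation of Baernstein's theorem, which compares \emph{star functions} (suprema of integrals over arcs of prescribed length) rather than pointwise rearrangements; and extracting the case $\Omega=\Omega^*$ from that inequality risks circularity, since one usually needs to already know that $g_{\Omega^*}$ is symmetric decreasing in order to unwind $g_{\Omega^*}^\star$. A clean self-contained route is polarization: for every line $L$ through the origin with the positive real axis in the closed half-plane $H^+$, circular symmetry of $\Omega$ gives $P_L\Omega=\Omega$, and the polarization inequality for Green's functions then yields $g_\Omega(\rho e^{i\theta_1},0)\ge g_\Omega(\rho e^{i\theta_2},0)$ whenever $0\le\theta_1\le\theta_2\le\pi$. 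Jenkins' own argument, which you correctly flag as an alternative, works instead with the module of the ring domain $\Omega\setminus\overline{G_r}$ and the behaviour of extremal length under circular symmetrization; it avoids potential theory but is more hands-on geometrically.
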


The second lemma is also implicit in \cite{Je55}, but this time we
spell out the details.

\begin{lemma}\label{L:strict}
Let $\Omega$ and $f$ be as in Theorem~\ref{T:circsym}.
If $f$ satisfies the condition~(ii) of that theorem, 
then either $f(z)\equiv \alpha z$ with $\alpha>0$ or,
for each $r\in(0,1)$, 
the map $\theta\mapsto|f(re^{i\theta})|$ is strictly decreasing on $[0,\pi]$.
\end{lemma}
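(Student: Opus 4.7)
The plan is to prove the contrapositive: assume that for some $r\in(0,1)$ the map $\theta\mapsto|f(re^{i\theta})|$ fails to be strictly decreasing on $[0,\pi]$, and conclude that $f(z)\equiv\alpha z$ with $\alpha>0$. Since the map is already decreasing by hypothesis, failure of strict decrease at this particular $r$ produces $0\le\theta_1<\theta_2\le\pi$ on which $|f(re^{i\theta})|$ equals a common value $c$. Because $f$ is conformal with $f(0)=0$, its only zero in $\mathbb{D}$ is at the origin, so in fact $c>0$.

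The central step is to promote the constancy of $|f|$ from the subarc $A:=\{re^{i\theta}:\theta_1<\theta<\theta_2\}$ to the entire circle $|z|=r$. Since $f$ has no zeros on that circle, $g(z):=\log|f(z)|$ is harmonic on an open annular neighborhood of $|z|=r$, and the reflected function
\[
G(z):=g(z)+g(r^{2}/\overline{z})-2\log c
\]
is likewise harmonic on such a neighborhood and vanishes on $A$. By the identity theorem for real-analytic functions, $G\equiv 0$ throughout this connected neighborhood; evaluating on $|z|=r$, where $r^{2}/\overline{z}=z$, gives $g\equiv\log c$ on the full circle $|z|=r$.

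The rest is routine: the rescaling $h(z):=f(rz)/c$ is a holomorphic, univalent self-map of $\mathbb{D}$, continuous up to $\partial\mathbb{D}$ with $|h|\equiv 1$ there, hence a finite Blaschke product and, being univalent, a Möbius automorphism of $\mathbb{D}$. The normalizations $h(0)=0$ and $h'(0)=rf'(0)/c>0$ force $h(z)\equiv z$, so $f(z)=(c/r)z$ on $D(0,r)$, and the identity theorem propagates this to all of $\mathbb{D}$, giving $f(z)\equiv\alpha z$ with $\alpha=c/r>0$.

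I expect the reflection step to be the main technical point: one must ensure that $g$ is harmonic on a single connected open neighborhood of $|z|=r$ (which follows from $f$ being zero-free there) so that the identity theorem genuinely applies. It is worth noting that the conjugation symmetry $f(\bar z)=\overline{f(z)}$ from (ii) is not used above; the work is done entirely by the ``decreasing'' clause together with reflection across the circle $|z|=r$ itself, rather than across the real axis.
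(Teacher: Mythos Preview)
Your reflection idea is sound, but the step ``by the identity theorem for real-analytic functions, $G\equiv 0$'' is not valid as written: a harmonic (hence real-analytic) function of two real variables can vanish on an arc without vanishing identically---for instance $\Im z$ vanishes on the real axis. What actually makes your construction work is a fact you did not state: the reflection symmetry $G(r^{2}/\overline z)=G(z)$ forces the \emph{normal} derivative of $G$ to vanish on the whole circle $|z|=r$, so on the arc $A$ both pieces of Cauchy data of $G$ are zero, and unique continuation then gives $G\equiv 0$. A cleaner fix is to bypass $G$ altogether and note that
\[
\Phi(z):=f(z)\,\overline{f(r^{2}/\overline z)}
\]
is holomorphic on an annular neighborhood of $|z|=r$ and equals $|f(z)|^{2}=c^{2}$ on $A$; now the identity theorem for \emph{holomorphic} functions yields $\Phi\equiv c^{2}$, hence $|f|\equiv c$ on the full circle. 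With this repair the remainder of your argument (Blaschke product, M\"obius rigidity) is correct.

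The paper proceeds quite differently and more directly. From
\[
\frac{\partial}{\partial\theta}\log|f(re^{i\theta})|=-\Im\Bigl(\frac{zf'(z)}{f(z)}\Bigr)
\]
condition~(ii) says that the harmonic function $h(z):=\Im\bigl(zf'(z)/f(z)\bigr)$ is nonnegative on the upper half-disk $\mathbb{D}^{+}$. The strong minimum principle then gives the dichotomy in one stroke: either $h>0$ on $\mathbb{D}^{+}$, whence $\theta\mapsto|f(re^{i\theta})|$ is strictly decreasing on $[0,\pi]$ for every $r$ simultaneously, or $h\equiv 0$, which forces $zf'(z)/f(z)$ to be a real constant, equal to $1$ by the normalization at $0$, so $f(z)\equiv\alpha z$. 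Thus the paper uses the ``decreasing'' hypothesis for all radii at once via a single harmonic function, whereas your route works radius by radius through Schwarz-type reflection; as you observed, your argument does not need the conjugation symmetry $f(\overline z)=\overline{f(z)}$.
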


\begin{proof}
As already seen in \eqref{E:derivative},  we have
\begin{equation}\label{E:derivative2}
\frac{\partial}{\partial\theta}\log |f(re^{i\theta})|
=-\Im\Bigl(re^{i\theta}\frac{f'(re^{i\theta})}{f(re^{i\theta})}\Bigr)
\quad(r\in(0,1),\theta\in[-\pi,\pi]).
\end{equation}
The condition (ii) in Theorem~\ref{T:circsym} therefore implies that
$\Im(zf'(z)/f(z))\ge0$ for all $z\in\mathbb{D}^+:=\mathbb{D}\cap\{\Im z>0\}$.
As $\Im(zf'(z)/f(z))$ is a harmonic function, if it attains a minimum 
on $\mathbb{D}^+$ then it must be constant on $\mathbb{D}^+$.
Thus either $f(z)\equiv \alpha z$ for some $\alpha>0$ or $\Im(zf'(z)/f(z))>0$ on $\mathbb{D}^+$. By \eqref{E:derivative2}, the latter condition implies that 
$\theta\mapsto|f(re^{i\theta})|$ is strictly decreasing on $[0,\pi]$
for each $r\in(0,1)$.
\end{proof}

\begin{proof}[Proof of Theorem~\ref{T:circsym}]
(i)$\Rightarrow$(ii):
If $\Omega$ is circularly symmetric, then it is certainly symmetric about the $x$-axis.
By uniqueness of $f$, we have $f(\overline{z})=\overline{f(z)}$ for all $z\in\mathbb{D}$.

Let $r\in(0,1)$. 
By Lemma~\ref{L:circsym},
$f$ maps the disk $D(0,r)$ onto a circularly symmetric domain. 
This  implies that
$\theta\mapsto|f(re^{i\theta})|$ is decreasing on $[0,\pi]$.

\medskip

(ii)$\Rightarrow$(i):
We begin with a preliminary remark. By Lemma~\ref{L:strict},
unless $f(z)\equiv f'(0)z$ (in which case the result is obvious), 
the map $\theta\mapsto|f(re^{i\theta})|$ is  strictly decreasing on $[0,\pi]$ for each $r\in(0,1)$, and likewise  strictly increasing on $[-\pi,0]$. 

Let $w\in\Omega\setminus[0,\infty)$, and let $C$ be the circle with centre $0$ and passing through~$w$.
Set $r:=|f^{-1}(w)|$. Then $f$ maps the circle $|z|=r$ onto a Jordan curve $J$ passing through $w$. 
The preliminary remark above shows that
$J$ meets $C$ precisely in the set $\{w,\overline{w}\}$.
Thus each
subarc of $C\setminus\{w,\overline{w}\}$  lies inside a component of
$\mathbb{C}\setminus J$. Furthermore, $|w|$ itself lies in the interior of~$J$.
Indeed, $|w|\in f([0,r))$, and $f([0,r))$ is a connected set containing $0$ and disjoint from $J$,
so is entirely contained in the interior of $J$.
This implies that the whole subarc of $C\setminus\{w,\overline{w}\}$ that contains $|w|$ 
lies in the interior of~$J$, and therefore in $\Omega$.
\end{proof}


\section{Bi-circularly symmetric domains}\label{S:bi-circsym}

If we combine Theorem~\ref{T:SchwarzJack} and Corollary~\ref{C:circsym}, then we immediately obtain the 
following result.

\begin{theorem}\label{T:circsymSchwarzJack}
Let $\Omega$ be a circularly symmetric, simply connected, proper
subdomain of $\mathbb{C}$ containing $0$. Let $f$
be the unique conformal mapping of $\mathbb{D}$ onto $\Omega$
satisfying $f(0)=0$ and $f'(0)>0$.
Then  $f|_{[0,1)}$  is a positive, strictly increasing,
convex function.
Moreover, $f$ is strictly convex on $[0,1)$ unless $\Omega$
is a disk with centre $0$.
\end{theorem}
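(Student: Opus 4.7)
The plan is essentially a direct combination of the two preceding results, as the authors themselves indicate. The holomorphic function $f$ maps $\mathbb{D}$ conformally onto $\Omega$ with $f(0)=0$ and $f'(0)>0$, so the normalisation hypothesis of Theorem~\ref{T:SchwarzJack} is fulfilled with $n=1$. Since $\Omega$ is circularly symmetric, Corollary~\ref{C:circsym} supplies the Jack condition~\eqref{E:Jack} for $f$. All hypotheses of Theorem~\ref{T:SchwarzJack} are thus in place, and the first assertion---that $f|_{[0,1)}$ is positive, strictly increasing, and convex---follows at once.

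For the moreover-clause, Theorem~\ref{T:SchwarzJack} tells us that strict convexity can only fail if $f(z)\equiv \alpha z$ for some $\alpha>0$. The remaining task is the small bookkeeping step of identifying this alternative with the geometric condition ``$\Omega$ is a disk with centre $0$''. In one direction, if $f(z)=\alpha z$ then $\Omega=f(\mathbb{D})$ is the open disk of radius $\alpha$ about~$0$. In the reverse direction, if $\Omega$ is an open disk $D(0,R)$ about $0$, then by the uniqueness clause in the Riemann mapping theorem---incorporating our normalisation $f(0)=0$, $f'(0)>0$---we must have $f(z)=Rz$.

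The main obstacle---if one can call it that---is merely verifying this last equivalence; there is no genuinely new analytic content beyond what has already been established in Theorem~\ref{T:SchwarzJack} and Corollary~\ref{C:circsym}.
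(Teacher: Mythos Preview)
Your proposal is correct and matches the paper's own argument exactly: the paper states that the theorem follows immediately by combining Theorem~\ref{T:SchwarzJack} with Corollary~\ref{C:circsym}, and you have spelled out precisely this combination, including the straightforward identification of the exceptional case $f(z)\equiv\alpha z$ with $\Omega$ being a disk about~$0$.
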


Jenkins  had previously obtained a weaker version
of this result in \cite[Theorem~3]{Je55} with the conclusion $f''(0)>0$ (unless $\Omega$
is a disk with centre $0$).
Interesting though Theorem~\ref{T:circsymSchwarzJack} may be, it is not quite what we need, 
since it does not include the possibility that $\Omega$ be an ellipse.
To cover that case, we need to adjust the notion of circularly symmetric domains.
Accordingly, we make the following definition.

Let us say that a plane domain $\Omega$ is \emph{bi-circularly symmetric} if its intersection with each circle around $0$ is either empty, or the whole circle, or the union of two subarcs of equal length centred respectively on the positive and negative real axes. 

An example of such a domain is the ellipse $\{x^2/a^2+y^2/b^2<1\}$,
where $a>b>0$. There are many other examples. 
One way to generate such examples is to start with a continuous, decreasing function $R:[0,\pi/2]\to(0,\infty)$, and then let $\Omega$ be the domain that is symmetric about the $x$- and $y$-axes 
and whose intersection with the 
first quadrant $Q:=\{x\ge 0,y\ge 0\}$ satisfies
\[
\Omega\cap Q=\{re^{i\theta}: 0\le r<R(\theta),\, 0\le \theta\le\pi/2\}.
\]
Figure~\ref{F:bi-circsym}(b) and (c) shows two such examples.
Note that they are not necessarily convex and that they may have cusps. 
Figure~\ref{F:bi-circsym}(d) exhibits an  example
of a simply connected,  bi-circularly symmetric domain that
is not even star-shaped.

\begin{center}
\begin{figure}[htb]
\begin{minipage}{0.49\textwidth}
  \begin{center}
  \includegraphics[scale=0.25]{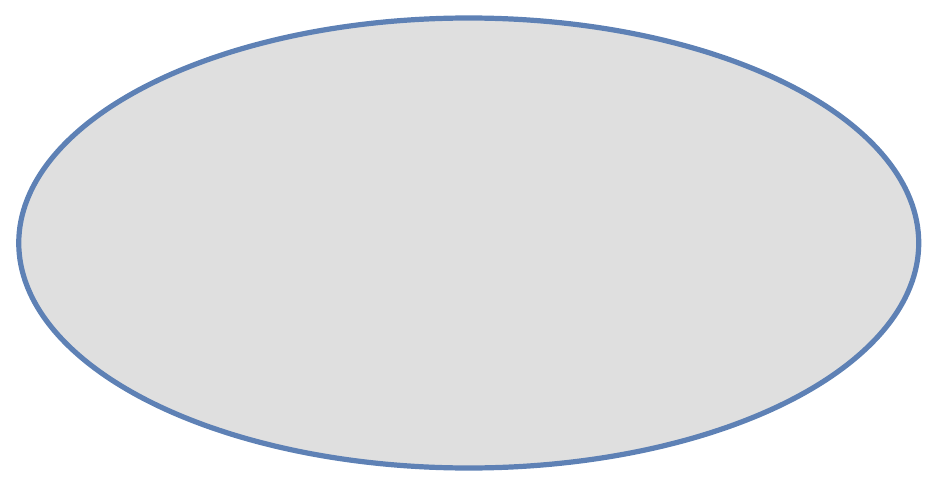}
  \par (a)
  \end{center}
\end{minipage}
\begin{minipage}{0.49\textwidth}
\begin{center}
   \includegraphics[scale=0.2]{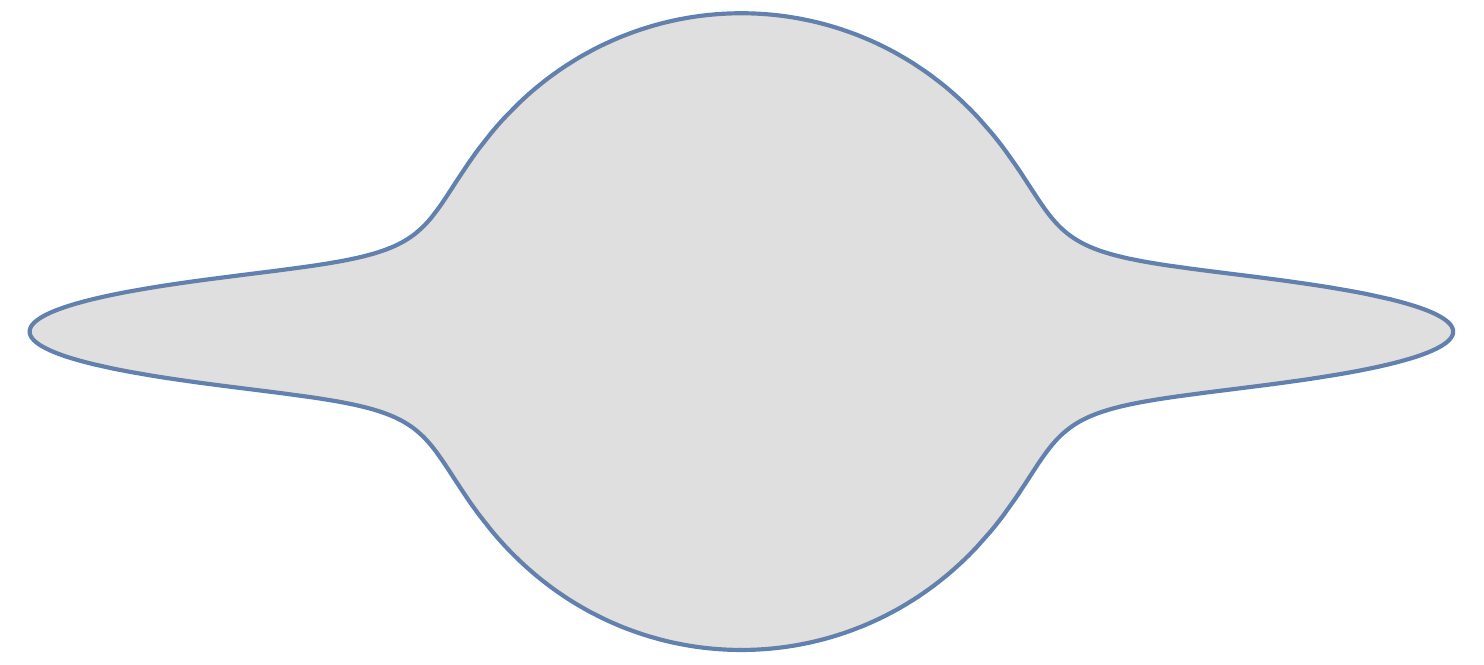}
   \par (b)
   \end{center}
\end{minipage}

\begin{minipage}{0.49\textwidth}
\begin{center}
   \includegraphics[scale=0.2]{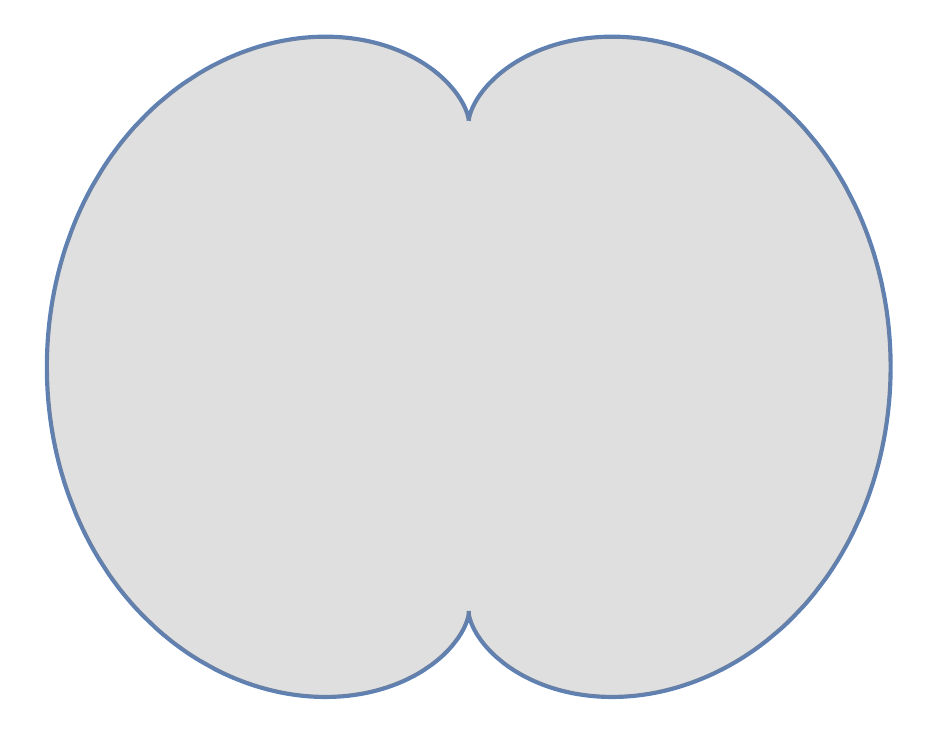}
   \par (c)
   \end{center}
\end{minipage}
\begin{minipage}{0.49\textwidth}
\begin{center}
  \includegraphics[scale=0.17]{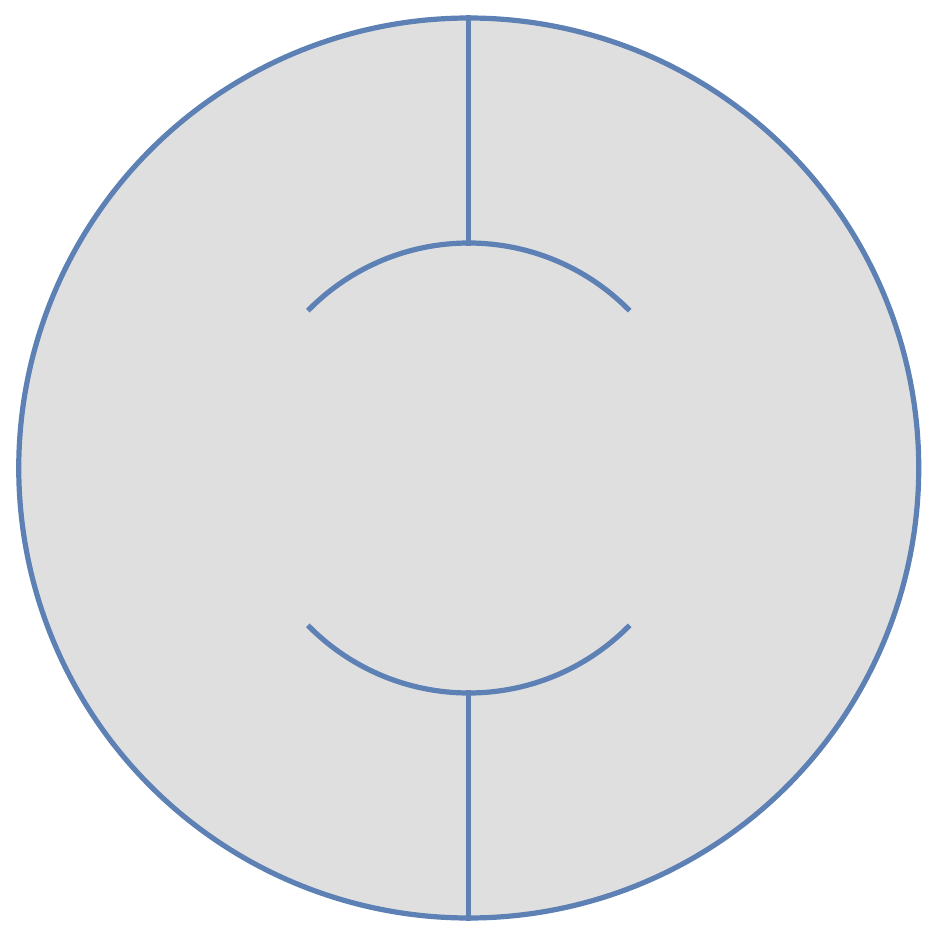}
  \par (d)
  \end{center}
\end{minipage}

\caption{Examples of bi-circularly symmetric domains:
\newline
(a) Ellipse $x^2+4y^2<1$ 
\newline
(b) Example with $R(\theta)=\sqrt{1/4+1/(1+50\theta^2)^2}$
for $\theta\in[0,\frac{\pi}{2}]$
\newline
(c) Image of $\mathbb{D}$ under $z+z^3/4-z^5/20$ (see Proposition~\ref{P:counterex})
\newline
(d) A non-star-shaped example}
\label{F:bi-circsym}
\end{figure}
\end{center}

Here is the result that we need.

\begin{theorem}\label{T:bi-circsym}
Let $\Omega$ be a bi-circularly symmetric, simply connected, proper
subdomain of $\mathbb{C}$ containing $0$. Let $f$
be the unique conformal mapping of $\mathbb{D}$ onto $\Omega$
satisfying $f(0)=0$ and $f'(0)>0$. Then:
\begin{enumerate}[\normalfont(i)]
\item $x\mapsto f(x)$ is positive, increasing and convex on $[0,1)$;
\item $y\mapsto -if(iy)$ is positive, increasing and concave on $[0,1)$.
\end{enumerate}
\end{theorem}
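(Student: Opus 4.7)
The plan is to reduce both parts to the circularly symmetric setting of Theorem~\ref{T:circsymSchwarzJack} via a squaring device. Since $\Omega$ is bi-circularly symmetric it satisfies $\Omega=-\Omega$, and uniqueness of the Riemann map forces $f$ to be odd; hence $\tilde f(w):=f(\sqrt{w})^2$ is well defined independently of the branch of $\sqrt{w}$ and gives a conformal map of $\mathbb{D}$ onto $\tilde\Omega:=\{w^2:w\in\Omega\}$ with $\tilde f(0)=0$ and $\tilde f'(0)=f'(0)^2>0$. The two arcs of $\Omega\cap\{|w|=\rho\}$ centred at $0$ and $\pi$ are collapsed by $w\mapsto w^2$ onto a single arc of $\{|w|=\rho^2\}$ centred at $0$, so $\tilde\Omega$ is circularly symmetric. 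Consequently Corollary~\ref{C:circsym} gives the Jack condition for $\tilde f$, and Theorem~\ref{T:circsymSchwarzJack} shows that $\tilde f|_{[0,1)}$ is positive, strictly increasing and convex.

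Part~(i) is then immediate: the identity $f(z)^2=\tilde f(z^2)$ combined with Jack for $\tilde f$ yields $|f(z)|^2=|\tilde f(z^2)|\le|\tilde f(|z|^2)|=|f(|z|)|^2$, so $f$ itself satisfies the Jack condition, and Theorem~\ref{T:SchwarzJack} applied to $f$ (with $n=1$) delivers~(i).

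For part~(ii), set $G(z):=-if(iz)$; this is a univalent holomorphic map of $\mathbb{D}$ onto $-i\Omega$ with $G(0)=0$, $G'(0)=f'(0)>0$, and the analogous squaring identity is $G(z)^2=-\tilde f(-z^2)$. By Theorem~\ref{T:circsym}(ii) the quantity $|\tilde f(\rho e^{i\psi})|$ is decreasing in $\psi$ on $[0,\pi]$, so on each circle $\{|w|=r^2\}$ the modulus $|\tilde f|$ is \emph{minimised} at $w=-r^2$. Applied to $|G(z)|^2=|\tilde f(-z^2)|$ this gives the reverse Jack inequality $|G(z)|\ge|G(|z|)|$ on $\mathbb{D}$, which geometrically encodes the fact that for bi-circularly symmetric $\Omega$ the modulus of $f$ on circles around $0$ is smallest in the imaginary direction. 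From here I would mimic the proof of Theorem~\ref{T:SchwarzJack} with every Schwarz-type inequality reversed. Positivity of $G$ on $(0,1)$ comes from the same logarithmic-derivative identity~\eqref{E:derivative}; monotonicity follows because $G$ is univalent and real on $(-1,1)$ with $G'(0)>0$, so $G'>0$ there. For concavity, reverse Jack combined with the minimum-modulus principle on the Jordan curve $G(\partial D(0,r))$ yields $D(0,G(r))\subset G(D(0,r))$, so that $\psi(z):=G^{-1}(G(r)z)/r$ is a well-defined self-map of $\mathbb{D}$ fixing $0$. Applying Schwarz--Pick to $\psi$ at real $t\in(0,1)$ produces, after simplification,
\[
\frac{G(r)-G(s)}{r-s}\le G'(s)\cdot\frac{1+s/r}{1+G(s)/G(r)},
\]
and reverse Jack gives $G(s)/G(r)\ge s/r$, so the last factor is at most~$1$. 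Hence $G(r)\le G(s)+(r-s)G'(s)$ for $0<s<r<1$, and letting $r\to s^+$ produces $G''(s)\le 0$, which is the concavity in~(ii).

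The main obstacle is this reverse Schwarz--Jack step for~(ii): one must carefully justify the disk inclusion $D(0,G(r))\subset G(D(0,r))$ from reverse Jack plus univalence, and then run the Schwarz--Pick estimate in the inverse direction $G^{-1}$ so that everything collapses to the concavity-side tangent-line bound instead of the convexity-side bound that appears in Theorem~\ref{T:SchwarzJack}.
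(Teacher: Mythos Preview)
Your argument is correct and follows the same overall strategy as the paper: use the oddness of $f$ to write $f(z)^2=g(z^2)$ with $g$ a conformal map onto the circularly symmetric domain $\Omega^2$, extract from Theorem~\ref{T:circsym} the two-sided estimate $|f(i|z|)|\le|f(z)|\le|f(|z|)|$, and read off part~(i) from Theorem~\ref{T:SchwarzJack}.

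For part~(ii) the paper packages the argument differently. Instead of running a ``reverse Schwarz--Pick'' computation on $G$, it applies Theorem~\ref{T:SchwarzJack} directly to $\psi(z):=-if^{-1}(ibz)=G^{-1}(bz)$ (the reverse-Jack inequality for $G$ being precisely the Jack condition for $\psi$), obtains that $G^{-1}$ is convex on $[0,b)$, and then invokes Lemma~\ref{L:concaveinverse} to turn this into concavity of $G$ on $[0,a)$. Your disk-inclusion step $D(0,G(r))\subset G(D(0,r))$ is exactly what is needed to see that $\psi$ is a self-map of $\mathbb{D}$ satisfying the Jack condition, so you have in fact identified and justified the one point the paper passes over quickly. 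The two routes are equivalent: yours inlines the Schwarz--Pick estimate and arrives at the tangent-line inequality for $G$ directly, while the paper's is more modular, reusing Theorem~\ref{T:SchwarzJack} and Lemma~\ref{L:concaveinverse} rather than repeating the computation.
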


In establishing this result, the following
elementary lemma will be helpful.

\begin{lemma}\label{L:concaveinverse}
Let $h$ be a continuous, strictly increasing, convex function
defined on an interval $I$.
Then $h^{-1}$ is a continuous, strictly increasing, concave function on $h(I)$.
\end{lemma}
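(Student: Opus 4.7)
The plan is to handle the three properties of $h^{-1}$ in sequence, each as a direct consequence of a corresponding property of $h$. First I would establish that $h(I)$ is actually an interval, by appealing to the intermediate value theorem applied to the continuous function $h$; this legitimises talking about $h^{-1}$ on $h(I)$. Strict monotonicity of $h^{-1}$ is then immediate from strict monotonicity of $h$ (if $y_1 < y_2$ but $h^{-1}(y_1) \ge h^{-1}(y_2)$, applying $h$ would contradict $y_1 < y_2$), and continuity of $h^{-1}$ is a standard consequence of the fact that a strictly monotone function on an interval whose image is an interval is automatically continuous.

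The heart of the lemma is the concavity assertion. I would take $y_1, y_2 \in h(I)$, $t\in[0,1]$, and set $x_i := h^{-1}(y_i) \in I$. The goal is to show
\[
h^{-1}(ty_1 + (1-t)y_2) \ge t\,h^{-1}(y_1) + (1-t)\,h^{-1}(y_2) = tx_1 + (1-t)x_2.
\]
Since $h$ is strictly increasing, this inequality is equivalent to
\[
ty_1 + (1-t)y_2 \ge h\bigl(tx_1 + (1-t)x_2\bigr),
\]
and this is precisely the convexity inequality for $h$ at $x_1, x_2$, since $y_i = h(x_i)$. So the concavity of $h^{-1}$ falls out immediately once the equivalence is set up.

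There is no real obstacle here; the only point that needs a moment's care is the very first step, namely justifying that $h(I)$ is an interval so that everything else makes sense. The core of the proof is just the observation that, for an increasing $h$, the concavity of $h^{-1}$ is a rephrasing of the convexity of $h$ after composing with $h$ on both sides.
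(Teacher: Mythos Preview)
Your proof is correct. The paper handles continuity and monotonicity in the same ``standard'' way you do, but for concavity it takes a different, more geometric route: it observes that the epigraph of $h$ is a convex set, that the hypograph of $h^{-1}$ is the reflection of this epigraph in the line $x=y$, and hence is also convex, which is equivalent to concavity of $h^{-1}$. Your approach is the direct inequality-chasing alternative: you pull the concavity inequality for $h^{-1}$ back through $h$ and recognise it as exactly the convexity inequality for $h$. Both arguments are short and standard; the epigraph version is perhaps more conceptual (it makes clear that convexity/concavity of a function and its inverse are literally the same geometric fact viewed from two axes), while yours is more self-contained and avoids invoking the epigraph characterisation of convexity.
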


\begin{proof}
It is standard that $h^{-1}$ is continuous and strictly increasing.
It remains to prove that it is concave.
As $h$ is a convex function, its epigraph is a convex set.
The hypograph of $h^{-1}$ is the image of the epigraph of $h$
under reflection in the line $x=y$, so it too is a convex set.
It follows that $h^{-1}$ is a concave function.
\end{proof}

\begin{proof}[Proof of Theorem~\ref{T:bi-circsym}]
By assumption,  $f$ is the unique conformal
map of $\mathbb{D}$ onto $\Omega$ such that $f(0)=0$
and $f'(0)>0$. The maps $z\mapsto -f(-z)$
and $z\mapsto\overline{f(\overline{z})}$ also have these
properties, so, by uniqueness, 
they are both equal to $f$. In other words,
\begin{equation}\label{E:holosym}
f(-z)=-f(z)
\quad\text{and}\quad
f(\overline{z})=\overline{f(z)}
\qquad(z\in\mathbb{D}).
\end{equation}
In particular $f$ maps $(-1,1)$ into the real axis
and $i(-1,1)$ into the imaginary axis.
Moreover, since $f(0)=0$ and $f'(0)>0$, we must have
$f(\mathbb{D}\cap\mathbb{R}^+)=\Omega\cap\mathbb{R}^+$ and
$f(\mathbb{D}\cap i\mathbb{R}^+)=\Omega\cap i\mathbb{R}^+$. 

As $f$ is an odd function, we can write $f(z)^2=g(z^2)$,
where $g$ is the conformal mapping of $\mathbb{D}$
onto  $\Omega^2:=\{w^2:w\in\Omega\}$ with $g(0)=0$ and $g'(0)>0$
(see e.g.\ \cite[p.28]{Du83}). Since $\Omega$ is bi-circularly
symmetric, it follows that $\Omega^2$ is circularly symmetric.
Applying Theorem~\ref{T:circsym} to the pair $\Omega^2,g$,
we deduce that, for each $r>0$, 
the map $\theta\mapsto|g(re^{i\theta})|$ is decreasing on $[0,\pi]$.
In particular,
\[
\bigl|g(-|z|)\bigr|\le |g(z)|\le\bigl|g(|z|)\bigr|\quad(z\in\mathbb{D}).
\]
It follows that
\begin{equation}\label{E:doubleJack}
\bigl|f(i|z|)\bigr|\le |f(z)|\le \bigl|f(|z|)\bigr|\quad(z\in\mathbb{D}).
\end{equation}
In particular, $f$ satisfies the Jack condition \eqref{E:Jack}.

By the Schwarz--Jack lemma, Theorem~\ref{T:SchwarzJack},
$f$ is positive, strictly increasing and convex on $[0,1)$.
This proves part~(i) of the theorem.

To prove part~(ii), let us fix $a\in(0,1)$ and write $f(ai)=bi$,
where $b>0$.
We  re-apply Theorem~\ref{T:SchwarzJack},
but this time to the function $\psi(z):=-if^{-1}(ibz)$.
The bi-circular symmetry of $\Omega$ implies that the
disk of centre $0$ and radius $b$ is contained in $\Omega$,
so $\psi$ is a function defined on the unit disk.
Note also that, since $f(0)=0$ and $f'(0)>0$,
we have $\psi(0)=0$ and $\psi'(0)=b/f'(0)>0$.
Moreover $\psi$ satisfies the Jack condition~\eqref{E:Jack}.
Indeed, using the left-hand side of \eqref{E:doubleJack}, we have
\[
\bigl|f(i|\psi(z)|)\bigr|\le |f(i\psi(z))|=|ibz|=|ib|z||=|f(i\psi(|z|))|,
\]
which, together with the fact that
$y\mapsto|f(iy)|$ is increasing on $[0,1)$,
implies that $|\psi(z)|\le|\psi(|z|)|$.
Thus Theorem~\ref{T:SchwarzJack} applies,
and we deduce that $y\mapsto -if^{-1}(iby)$
is a positive, strictly increasing and convex function on $[0,1)$,
in other words, 
$y\mapsto -if^{-1}(iy)$
is  positive, strictly increasing and convex on $[0,b)$.
By Lemma~\ref{L:concaveinverse},
it follows that 
$y\mapsto -if(iy)$
is  positive, strictly increasing and concave on $[0,a)$.
Finally, as this holds for each $a\in(0,1)$, we deduce that
(ii) holds.
\end{proof}

\begin{remark}
If $f$ is a holomorphic function on $\mathbb{D}$ all of whose Taylor coefficients
are non-negative, then clearly $f$ satisfies the Jack condition  \eqref{E:Jack}
and it is positive, increasing and convex on $[0,1)$.
Non-negativity of the Taylor coefficients is a more stringent requirement
than the conditions developed in this article,
but there  is one interesting case where this does indeed happen.

Kanas--Sugawa showed in \cite{KS06} that,
if $\Omega$ is an ellipse $\{x^2/a^2+y^2/b^2<1\}$,
where $a>b>0$, and if $f$ is the conformal mapping of $\mathbb{D}$ onto $\Omega$
such that $f(0)=0$ and $f'(0)>0$,
then all the Taylor coefficients of $f$ are non-negative.
Their proof depends heavily on the explicit form of $f$,
expressed in terms of elliptic functions,
and their result does not extend to conformal maps of the unit disk
onto more general bi-circularly symmetric domains. The following proposition
provides some simple counterexamples
and also reveals the broader scope of Theorem~\ref{T:bi-circsym}.
\end{remark}

\begin{proposition}\label{P:counterex}
Let $f(z):=z+az^3-bz^5$, where $a,b$ are positive constants satisfying $3a+5b\le1$ and $ab+4b\le a$.
Then $f$ is a conformal mapping of~$\mathbb{D}$ onto a bi-circularly symmetric domain $\Omega$.
\end{proposition}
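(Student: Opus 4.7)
The plan is to reduce bi-circular symmetry of $\Omega:=f(\mathbb{D})$ to circular symmetry of $\Omega^2:=\{w^2:w\in\Omega\}$, and then apply Theorem~\ref{T:circsym} to the latter. Since $f(z)=z+az^3-bz^5$ is odd with real coefficients, $f^2$ is even, so $f(z)^2=h(z^2)$ with $h(w):=w(1+aw-bw^2)^2$. Because $\Omega=-\Omega$, an angle-doubling argument (the squaring map sends the two arcs $\Omega\cap\{|w|=\rho\}$ centred on $\pm\mathbb{R}^+$ to a single arc of $\Omega^2\cap\{|u|=\rho^2\}$ centred on $\mathbb{R}^+$) shows that $\Omega$ is bi-circularly symmetric if and only if $\Omega^2=h(\mathbb{D})$ is circularly symmetric.

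First I would establish univalence of $f$ (and thence of $h$). From $f'(z)=1+3az^2-5bz^4$,
\[
|f'(z)-1|\le 3a|z|^2+5b|z|^4<3a+5b\le 1 \quad(z\in\mathbb{D}),
\]
so $\Re f'>0$ and the Noshiro--Warschawski theorem gives univalence of $f$. The argument used in the proof of Theorem~\ref{T:bi-circsym} (write $w_j=z_j^2$ and invoke oddness plus univalence of $f$) then yields univalence of $h$. Hence $\Omega^2$ is a simply connected proper subdomain of $\mathbb{C}$ containing $0$, with $h(0)=0$ and $h'(0)=1>0$. Since $h$ has real coefficients, the first half of condition (ii) in Theorem~\ref{T:circsym} is automatic, and it remains to show that $\theta\mapsto|h(re^{i\theta})|$ is decreasing on $[0,\pi]$ for each $r\in(0,1)$.

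By \eqref{E:derivative} this is equivalent to $\Im(wh'(w)/h(w))\ge 0$ on the upper half disk. A short calculation with $p(w):=1+aw-bw^2$ gives
\[
\frac{wh'(w)}{h(w)}=\frac{1+3aw-5bw^2}{1+aw-bw^2}=:F(w),
\]
and the bound $|aw-bw^2|\le a+b\le(3a+5b)/3\le 1/3$ guarantees that $F$ is holomorphic on $\mathbb{D}$. Because $F$ is real on the real diameter, $\Im F$ is harmonic on the upper half disk and vanishes on the diameter, so by the minimum principle it suffices to verify $\Im F\ge 0$ on the upper semicircle. Expanding $(1+3ae^{i\theta}-5be^{2i\theta})\overline{(1+ae^{i\theta}-be^{2i\theta})}$ and collecting imaginary parts yields the identity
\[
\Im F(e^{i\theta})\cdot|1+ae^{i\theta}-be^{2i\theta}|^2 = 2\sin\theta\bigl(a(1-b)-4b\cos\theta\bigr).
\]
On $[0,\pi]$ one has $\sin\theta\ge 0$, and since $5b\le 1$ the second factor is minimized at $\cos\theta=1$, producing $a-ab-4b$. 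Nonnegativity is therefore equivalent to $ab+4b\le a$, which is exactly the second hypothesis. Theorem~\ref{T:circsym}(ii)$\Rightarrow$(i) then delivers circular symmetry of $\Omega^2$, and hence bi-circular symmetry of $\Omega$.

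The main obstacle is this final boundary calculation: the algebra produces precisely the factor $a(1-b)-4b\cos\theta$, and only at the worst-case value $\cos\theta=1$ does the condition become binding. The somewhat mysterious algebraic form $ab+4b\le a$ of the second hypothesis is therefore dictated exactly by this one value of $\theta$, and verifying the identity cleanly is where the bookkeeping requires care.
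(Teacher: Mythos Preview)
Your argument is correct, but it follows a genuinely different route from the paper's proof. The paper works directly with $f$ throughout: it shows $\bigl|zf'(z)/f(z)-1\bigr|\le(2a+4b)/(1-a-b)\le 1$ on $|z|=1$ (this is where $3a+5b\le1$ enters), deduces $\Re(zf'/f)\ge0$ and hence that $f$ is univalent \emph{and} $\Omega$ is starlike. It then computes
\[
\frac{d}{d\theta}\bigl|f(e^{i\theta})\bigr|^2=-4\sin(2\theta)\bigl(a-ab-4b\cos(2\theta)\bigr)
\]
and uses $ab+4b\le a$ to see this is $\le0$ on $[0,\pi/2]$; bi-circular symmetry then follows from starlikeness together with the two reflection symmetries. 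Your approach instead obtains univalence via Noshiro--Warschawski (which gives no starlikeness), passes to $h(w)=w(1+aw-bw^2)^2$ through the squaring trick, and invokes Theorem~\ref{T:circsym} on $h$. The two computations are in fact the same in disguise: since $zf'(z)/f(z)=1+2z^2p'(z^2)/p(z^2)$ and $wh'(w)/h(w)=1+2wp'(w)/p(w)$ for $p(w)=1+aw-bw^2$, your boundary identity is the paper's with $\theta$ replaced by $2\theta$. What you gain is a tighter link to the paper's own machinery (Theorem~\ref{T:circsym}); what the paper gains is self-containment and starlikeness of $\Omega$ as a by-product. One small remark: your phrase ``since $5b\le1$ the second factor is minimized at $\cos\theta=1$'' is misleading---the minimization at $\cos\theta=1$ holds simply because $b>0$.
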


For example, we can take $f(z):=z+z^3/4-z^5/20$.
The corresponding  domain $\Omega$ is pictured in
Figure~\ref{F:bi-circsym}(c).

\begin{proof}[Proof of Proposition~\ref{P:counterex}]
If $|z|=1$, then
\[
\Bigl|\frac{zf'(z)}{f(z)}-1\Bigr|=\Bigl|\frac{2az^3-4bz^5}{z+az^3-bz^5}\Bigr|\le \frac{2a+4b}{1-(a+b)}\le1.
\]
By the maximum principle, the same inequality persists for all $z\in\mathbb{D}$,
from which it follows that $\Re (zf'(z)/f(z))\ge0$.
Together with the facts that $f(0)=0$ and $f'(0)=1$,
this implies that $f$ is univalent and that $\Omega:=f(\mathbb{D})$ is star-shaped about $0$
(see, e.g., \cite[Theorem~2.10]{Du83}).
Clearly we  have $f(-z)=-f(-z)$ and $f(\overline{z})=\overline{f(z)}$
for all $z\in\mathbb{D}$, so $\Omega$ is symmetric with respect to the $x$-axis and the $y$-axis.
A simple calculation gives
\[
\frac{d}{d\theta}|f(e^{i\theta})|^2=-4\sin(2\theta)\Bigl(a-ab-4b\cos(2\theta)\Bigr)\le0 \quad(\theta\in[0,\pi/2]),
\]
where the last inequality uses the condition that $ab+4b\le a$.
Thus
$\theta\mapsto|f(e^{i\theta})|$
is decreasing on $[0,\pi/2]$.
It follows that $\Omega$ is bi-circularly symmetric.
This completes the proof.
\end{proof}


\section{Application to numerical ranges}\label{S:nr}

Let $n\ge2$, and let  $\|\cdot\|$ and  $\langle\cdot,\cdot\rangle$  
be the usual Euclidean norm and inner product on $\mathbb{C}^n$, respectively.
Given an $n\times n$ matrix $A$, we denote by $W(A)$ its numerical range, namely
\[
W(A):=\Bigl\{\langle Ax,x\rangle : x\in\mathbb{C}^n,\,\|x\|=1\Bigr\}.
\]
It is a convex compact set containing the eigenvalues of $A$.
Crouzeix established the following result in \cite[Theorem~1.1]{Cr04}.

\begin{theorem}\label{T:Cr2x2}
If $A$ is a $2\times 2$ matrix 
and  $p$ is a polynomial, then
the operator norm of $p(A)$ satisfies
\begin{equation}\label{E:Cr2x2}
\|p(A)\|\le 2\max_{z\in W(A)}|p(z)|.
\end{equation}
\end{theorem}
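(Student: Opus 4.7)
The plan is to reduce $A$ to a canonical form for which $W(A)$ is a bi-circularly symmetric ellipse, apply Theorem~\ref{T:bi-circsym} to the associated conformal map, and close the estimate by a two-point Nevanlinna--Pick argument.

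\medskip

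Both sides of \eqref{E:Cr2x2} are invariant under unitary similarity $A\mapsto U^{*}AU$, under translation $A\mapsto A-\zeta I$ (combined with $p(z)\mapsto p(z+\zeta)$), and under rotation/scaling $A\mapsto \omega A$ with $|\omega|=1$ (combined with $p(z)\mapsto p(\bar\omega z)$). Using these I would reduce to the case
\[
A=\begin{pmatrix}\lambda & 2\beta \\ 0 & -\lambda\end{pmatrix},\qquad \lambda,\beta\ge 0.
\]
For such $A$, $W(A)$ is the closed ellipse $\overline E$ with foci $\pm\lambda$, semi-major axis $\alpha:=\sqrt{\lambda^{2}+\beta^{2}}$ along the real axis, and semi-minor axis $\beta$ along the imaginary axis, so $E$ is bi-circularly symmetric; the degenerate cases $\lambda=0$ (disk) and $\beta=0$ (segment) are handled separately by direct computation.

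\medskip

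Let $f:\mathbb{D}\to E$ be the conformal map with $f(0)=0$ and $f'(0)>0$, and set $\mu:=f^{-1}(\lambda)\in(0,1)$. The symmetries of $E$ force $f(-z)=-f(z)$ and $f(\bar z)=\overline{f(z)}$. By Theorem~\ref{T:bi-circsym}, $f|_{[0,1)}$ is strictly increasing and convex with $f(1^{-})=\alpha$, while $y\mapsto -if(iy)|_{[0,1)}$ is strictly increasing and concave with limit $\beta$. Convexity gives the sandwich $f'(0)\mu\le f(\mu)\le \alpha\mu$, hence $\mu\ge \lambda/\alpha$; concavity gives $f'(0)\ge\beta$ and so $\mu\le\lambda/\beta$. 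These are the two scalar geometric inputs supplied by Theorem~\ref{T:bi-circsym}.

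\medskip

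A direct calculation with the upper-triangular form of $A$ gives
\[
p(A)=\begin{pmatrix} p(\lambda) & \dfrac{\beta}{\lambda}\bigl(p(\lambda)-p(-\lambda)\bigr)\\[3pt] 0 & p(-\lambda)\end{pmatrix},
\]
so $\|p(A)\|^{2}$ can be written as an explicit function of $|p(\lambda)|^{2}$, $|p(-\lambda)|^{2}$, $|p(\lambda)-p(-\lambda)|^{2}$ and $|p(\lambda)p(-\lambda)|^{2}$ (from the trace and determinant of $p(A)^{*}p(A)$). After normalising $\|p\|_{\overline E}=1$ and writing $a:=p(\lambda)$ and $b:=p(-\lambda)$, the composition $g:=p\circ f$ is a holomorphic self-map of $\overline{\mathbb{D}}$ with $g(\mu)=a$ and $g(-\mu)=b$, so the $2\times 2$ Pick matrix of $g$ at the nodes $\pm\mu$ must be positive semidefinite. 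Combining this positivity with the two bounds on $\mu$ from the previous paragraph then yields $\|p(A)\|\le 2$.

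\medskip

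\textbf{The main obstacle} is precisely this last step: extracting the sharp constant $2$ from the two analytic inequalities supplied by Theorem~\ref{T:bi-circsym}. The naive strategy of decomposing $p=p_{e}+p_{o}$ into even and odd parts and applying the triangle inequality together with Schwarz's lemma to $p_{o}\circ f$ produces a constant strictly greater than $2$, so the argument must treat the two-point interpolation data at $\pm\mu$ jointly, using the convexity of $f$ on the real diameter and the concavity of $-if(i\cdot)$ on the imaginary diameter simultaneously. Verifying the degenerate cases $\lambda=0$ and $\beta=0$ and the limit $\mu\to 0$ or $\mu\to 1^{-}$ will also require attention but should be straightforward.
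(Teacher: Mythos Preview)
Your proposal has a genuine gap at precisely the step you flag as ``the main obstacle,'' and the gap is not merely technical. Since $p(A)$ depends only on $(a,b)=(p(\lambda),p(-\lambda))$, your Nevanlinna--Pick route amounts to maximizing $\|M(a,b)\|$ over the Pick body at the nodes $\pm\mu$, where
\[
M(a,b)=\begin{pmatrix} a & \dfrac{\beta}{\lambda}(a-b)\\[2pt] 0 & b\end{pmatrix}.
\]
The maximum is attained at $(a,b)=(\mu,-\mu)$ (the values of the conformal map $\phi$), for which $M=(\mu/\lambda)A$ and hence $\|M\|=\mu(\alpha+\beta)/\lambda$. Thus your argument would have to establish
\[
\mu\le \frac{2\lambda}{\alpha+\beta},
\]
which, in the paper's normalization $\lambda=1$, is exactly inequality~\eqref{E:harderineq}, $\phi(1)\le 2/\rho$. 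The paper singles this out as ``(at least formally) harder'' than $\phi(1)\le\phi'(0)$, and the known proofs of it go through the explicit elliptic-function formula for $\phi$---the very thing the paper sets out to avoid. The bounds you extract from Theorem~\ref{T:bi-circsym}, namely $\lambda/\alpha\le\mu\le\lambda/\beta$, bracket $2\lambda/(\alpha+\beta)$ strictly on both sides, so they cannot yield it; and the sharper relation $\mu f'(0)\le\lambda$ in your sandwich does not help either, since in fact $f'(0)<(\alpha+\beta)/2$.

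The paper sidesteps this by \emph{not} estimating $\|p(A)\|$ directly. It first passes to an extremal $\phi$, shows $\phi$ must be the odd normalized conformal map of $W(A)$ onto $\overline{\mathbb{D}}$, and then invokes the Crouzeix--Palencia inequality $\|\phi(A)+\psi(A)^{*}\|\le 2$, where $\psi$ is the Cauchy transform of $\overline{\phi}$. A short computation gives $\psi(A)\phi(A)=\bigl(1-\phi(1)/\phi'(0)\bigr)I$, so everything reduces to $\phi(1)\le\phi'(0)$---which is \emph{exactly} the first half of your sandwich $f'(0)\mu\le f(\mu)=\lambda$. In other words, you already possess the single scalar inequality the paper needs; what is missing from your plan is the Crouzeix--Palencia machinery that lets this easier inequality do the job instead of~\eqref{E:harderineq}.
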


In the same paper, he conjectured that the result also holds for general $n\times n$ matrices.
More than twenty years later, this is still an open problem, even for $3\times 3$ matrices.
A celebrated result of Crouzeix--Palencia \cite{CP17} shows that \eqref{E:Cr2x2} holds for all
$n\times n$ matrices if one replaces $2$ by $1+\sqrt{2}$.

In this section,  we present a proof of Theorem~\ref{T:Cr2x2}
based  on ideas of Caldwell, Greenbaum and Li taken from \cite[\S6.3]{CGL17}.
Certain elements of the proof also appear in \cite{CGH14}.
(The reference \cite{CGL17} is  a preprint on the arXiv. The  version of this paper that was
published in a journal
no longer includes the proof of Theorem~\ref{T:Cr2x2}.)
Our  proof differs from other published proofs (including that in \cite{CGL17})
in that it does not rely on an explicit formula for the conformal map of an ellipse onto a disk.

\begin{proof}[Proof of Theorem~\ref{T:Cr2x2}]
Let $A$ be a $2\times 2$ complex matrix.
It is clear that the problem of establishing \eqref{E:Cr2x2} is invariant
under the transformations $A\mapsto \alpha A+\beta I$, where $\alpha,\beta\in\mathbb{C}$,
and $A\mapsto U^*AU$, where $U$ is unitary.
Therefore, one quickly sees that it suffices to consider the case where
\begin{equation}\label{E:A}
A=
\begin{pmatrix}
1 &2b\\
0 &-1
\end{pmatrix},
\end{equation}
where $b>0$.
In this case, $W(A)$ is the ellipse $x^2/a^2+y^2/b^2\le1$,
where $a^2=b^2+1$. The foci of the ellipse are the eigenvalues $\pm 1$.

Let $\phi$ be an \emph{extremal function} for $A$, i.e.,   a function maximizing $\|\phi(A)\|$
among all continuous functions on $W(A)$ of sup-norm $1$ that are holomorphic on the interior of $W(A)$.
By \cite[Theorem~2.1]{Cr04}, $\phi$ is 
the composition of a conformal mapping of 
 $W(A)^\circ$ onto the unit disk $\mathbb{D}$ and a Blaschke product of
degree~$1$, so is itself conformal. It  can be extended to a homeomorphism of $W(A)$ onto $\overline{\mathbb{D}}$ by means of Carath\'eodory's theorem.
Our eventual aim is to show that $\|\phi(A)\|\le2$.

We next show that $\tr(\phi(A))=0$.
Set $B:=\phi(A)$ and 
let $x$ be a unit vector in $\mathbb{C}^2$ on which $B$ attains its norm. By
\cite[Proposition~2.2]{CGH14}, if $\|B\|>1$ (which we may as well suppose to be the case),
 then $\langle Bx,x\rangle=0$. Since $B^*Bx=\|B\|^2 x$, we also have $\langle B(Bx),Bx\rangle=\langle Bx,B^*Bx\rangle=\|B\|^2\langle Bx,x\rangle=0$.
 Thus the matrix of $B$ with respect to the orthogonal basis
 $\{x,Bx\}$ of $\mathbb{C}^2$ has zero entries on its diagonal, and hence $\tr(B)=0$, as claimed.

We next show that $\phi$ is an odd function, and in particular that $\phi(0)=0$.
Indeed, the eigenvalues of $\phi(A)$ are located 
at $\phi(\pm 1)$, and, as $\tr(\phi(A))=0$,
they must sum to zero. Thus $\phi(-1)=-\phi(1)$.  Consider  $m(z):=\phi(-\phi^{-1}(-z))$.
This is a  M\"obius automorphism of the unit disk, and it has
at least two fixed points, namely $\pm\phi(1)$, so it must be the identity.
Hence $\phi(-z)=-\phi(z)$, as claimed.
Multiplying $\phi$ by an appropriate unimodular constant, we may further suppose that $\phi'(0)>0$.
To summarize, $\phi$ is the unique homeomorphism 
of $W(A)$ onto $\overline{\mathbb{D}}$ mapping $W(A)^\circ$ 
conformally onto $\mathbb{D}$ and such that
$\phi(0)=0$ and $\phi'(0)>0$.

The next step is to invoke \cite[Lemma~2.3]{CP17}.
This tells us that, if $\psi$ is the Cauchy transform of $\overline{\phi}$ on $W(A)$, namely
\[
\psi(z):=\frac{1}{2\pi i}\int_{\partial W(A)} \frac {\overline{\phi}(\zeta)}{\zeta -z}\,d\zeta
\quad(z\in W(A)^\circ),
\]
then
\begin{equation}\label{E:CP}
\|\phi(A)+\psi(A)^*\|\le 2.
\end{equation}
Recalling that $\phi(A)$ attains its norm on the unit vector $x$, we deduce that
\begin{align*}
\|\phi(A)\|^2
&=\|\phi(A)x\|^2\\
&=\|(\phi(A)+\psi(A)^*)x\|^2-2\Re\langle \phi(A)x,\psi(A)^*x\rangle-\|\psi(A)^*x\|^2\\
&\le 4-2\Re\langle \psi(A)\phi(A)x,x\rangle.
\end{align*}
Thus, to complete the proof that $\|\phi(A)\|\le2$,
it suffices to show that
\begin{equation}\label{E:ineqtoprove}
\Re\langle \psi(A)\phi(A)x,x\rangle\ge0.
\end{equation}

To do this, we  compute $\phi(A)$ and $\psi(A)$.
Since $A$ has eigenvalues $\pm1$, it can be written as
\[
A=S^{-1}\begin{pmatrix}1 &0\\0 &-1\end{pmatrix}S,
\]
where $S$ is an invertible $2\times 2$ matrix. Hence
\begin{equation}\label{E:phi(A)}
\phi(A)=S^{-1}\begin{pmatrix}\phi(1) &0\\0 &\phi(-1)\end{pmatrix}S
=S^{-1}\begin{pmatrix}\phi(1) &0\\0 &-\phi(1)\end{pmatrix}S=\phi(1)A,
\end{equation}
where the second equality uses the fact that $\phi$ is odd.
Also $|\phi|=1$ on $\partial W(A)$, so $\overline{\phi}=1/\phi$ there, and so,
by the residue theorem,
\[
\psi(z)
=\frac{1}{2\pi i}\int_{\partial W(A)} \frac {1/\phi(\zeta)}{\zeta -z}\,d\zeta
=\frac{1}{\phi(z)}-\frac{1}{\phi'(0) z} \quad(z\in W(A)^\circ\setminus\{0\}).
\]
From \eqref{E:A} we have $\sigma(A)=\{-1,1\}\subset W(A)^\circ\setminus\{0\}$, and so
it follows that
\begin{equation}\label{E:psi(A)}
\psi(A)=\phi(A)^{-1}-\frac{1}{\phi'(0)}A^{-1}.
\end{equation}
Combining \eqref{E:phi(A)} and \eqref{E:psi(A)}, we obtain
\[
\psi(A)\phi(A)=\Bigl(1-\frac{\phi(1)}{\phi'(0)}\Bigr)I.
\]
Thus the desired inequality \eqref{E:ineqtoprove} will follow if we can show that
$\phi(1)\le\phi'(0)$.

To summarize, we have shown that the task of proving Theorem~\ref{T:Cr2x2}
reduces to showing that, if $\phi$ denotes the unique conformal mapping
of the ellipse $W(A)^\circ$ onto $\mathbb{D}$ satisfying $\phi(0)=0$ and $\phi'(0)>0$, then
\begin{equation}\label{E:ineqtoprove2}
\phi(1)\le\phi'(0).
\end{equation}
To this end, we invoke Theorem~\ref{T:bi-circsym}, 
applied to the map $\phi^{-1}:\mathbb{D}\to W(A)^\circ$.
By that theorem, $\phi^{-1}|_{[0,1)}$
is a positive, strictly increasing, convex function.
By Lemma~\ref{L:concaveinverse},
it follows that the restriction of $\phi$ to
$W(A)^\circ\cap\mathbb{R}^+$
is a positive, strictly increasing, concave function.
This clearly implies that $\phi(x)/x\le\phi'(0)$
for all $x\in W(A)\cap\mathbb{R}^+$.
In particular \eqref{E:ineqtoprove2} holds.
The proof of Theorem~\ref{T:Cr2x2} is complete.
\end{proof}

\begin{remarks}
(i) The idea of approaching the Crouzeix conjecture using the inequality \eqref{E:ineqtoprove}
was already remarked
by Schwenninger and de Vries in \cite[Theorem~6.1]{SdV25}.

\medskip

(ii) In \cite{CGL17}, the inequality \eqref{E:ineqtoprove2} is proved as follows. Let $\Omega$ be the ellipse $\{x^2/a^2+y^2/b^2<1\}$,
where $0<b<a$ and $a^2=b^2+1$. It can be shown that the conformal mapping $\phi$ of $\Omega$ onto $\mathbb{D}$
such that $\phi(0)=0$ and $\phi'(0)>0$ has the  form
\begin{equation}\label{E:phiform}
\phi(z)=\frac{2z}{\rho}\exp\Bigl(\sum_{n=1}^\infty \frac{2(-1)^nT_{2n}(z)}{n(1+\rho^{4n})}\Bigr),
\end{equation}
where $T_{2n}$ denotes the $2n$-th Chebyshev polynomial and $\rho:=a+b$ (see \cite{Sz50}).
Then
\begin{equation}\label{E:formulas}
\phi(1)=\frac{2}{\rho}\exp\Bigl(\sum_{n=1}^\infty \frac{2(-1)^n}{n(1+\rho^{4n})}\Bigr)
~~\text{and}~~
\phi'(0)=\frac{2}{\rho}\exp\Bigl(\sum_{n=1}^\infty \frac{2}{n(1+\rho^{4n})}\Bigr),
\end{equation}
from which the desired inequality \eqref{E:ineqtoprove2}  follows.

The proofs of Theorem~\ref{T:Cr2x2}  given in \cite{BCD06} and \cite{Cr04} also pass via the
representation \eqref{E:phiform},
but depend on the inequality
\begin{equation}\label{E:harderineq}
\phi(1)\le 2/\rho.
\end{equation}
Using the formulas \eqref{E:formulas}, we can see that
\[
2/\rho< \phi'(0).
\]
In fact, as remarked in \cite{Sz50},  this is an expression of the fact
that the inner radius of an ellipse is less than the outer radius.
Thus the inequality \eqref{E:harderineq}  is (at least formally) harder to prove than \eqref{E:ineqtoprove2}.

\end{remarks}


\section{Concluding remarks}
An eventual proof of the Crouzeix conjecture for $n\times n$ matrices would presumably include, as a special case, a proof for the $2\times 2$ case that does not depend on the special properties of ellipses.
This motivated us to seek such a proof of the $2\times 2$ case.

In the course of this project, we indeed found several such proofs, 
based on a number of different ideas, including symmetrization inequalities, conformal metrics
and the maximum principle.
Ultimately,  none of these appear in the paper, 
because we discovered a simpler approach  based on the work  of 
Jenkins \cite{Je55} on circularly symmetric functions.

We believe that the function-theory  results that we have developed along the way
are of interest in their own right, as well as possibly pointing the way
to an eventual proof of the Crouzeix conjecture.

  \section*{Acknowledgements}
 Part of this work was done while A.M.~and O.R.~ were visiting
 Universit\'e Laval, supported by a grant from the Simons Foundation. They  express their thanks for the wonderful
 atmosphere they experienced there.


\bibliographystyle{plain}
\bibliography{bibfile}

\end{document}